\documentclass[11pt, a4paper]{amsart}
\usepackage{a4wide}
\usepackage[
backref=page,colorlinks,bookmarksopen=true]{hyperref}
\usepackage[latin1]{inputenc}  
\usepackage[T1]{fontenc}       

\usepackage[english]{babel}

\usepackage{amssymb}
\usepackage{latexsym}
\usepackage{mathrsfs}
\usepackage{xspace}
\usepackage{graphics, graphicx, epsfig}

\usepackage[all, cmtip]{xy}

\newtheorem{prop}{Proposition}[section]
\newtheorem{thm}[prop]{Theorem}
\newtheorem*{thm*}{Theorem}

\newtheorem*{addendum*}{Addendum}
\newtheorem{cor}[prop]{Corollary}
\newtheorem{lem}[prop]{Lemma}

\newtheorem*{convention*}{Convention}
\theoremstyle{definition}
\newtheorem*{defn*}{Definition}

\newtheorem*{scholium*}{Scholium}
\theoremstyle{remark}

\newtheorem*{example*}{Example}
\numberwithin{equation}{section}

\newcommand{\vareps}{\varepsilon}

\newcommand{\AAA}{\mathbb{A}}

\newcommand{\RR}{\mathbf{R}}

\newcommand{\g}{\mathfrak{g}}
\newcommand{\tangle}[2]
{\angle_\mathrm{T}(#1,#2)}
\newcommand{\aangle}[3]
{\angle_{#1}(#2,#3)}
\newcommand{\cangle}[3]
{\overline{\angle}_{#1}(#2,#3)}



\def\max{\mathop{\mathrm{max}}\nolimits}
\def\Op{\mathop{\mathrm{Opp}}\nolimits}
\begin{document}
\title[On the distortion of twin building lattices]{On the distortion of twin building lattices}

\author[Pierre-Emmanuel Caprace]{Pierre-Emmanuel Caprace*}
\address{Universit\'e Catholique de Louvain\\ Chemin du Cyclotron 2\\ 1348 Louvain-la-Neuve\\ Belgium}
\email{pierre-emmanuel.caprace@uclouvain.be}
\thanks{*Supported by the Fund for Scientific Research--F.N.R.S., Belgium}
\author[Bertrand R\'emy]{Bertrand R\'emy**}
\address{Universit\'e de Lyon, Universit\' e Lyon 1\\
Institut Camille Jordan\\
UMR 5208 du CNRS\\
B\^atiment J. Braconnier\\
43 blvd du 11 novembre 1918\\
F-69622 Villeurbanne Cedex -- France
}
\email{remy@math.univ-lyon1.fr}
\thanks{**Supported in part by ANR project GGPG: G\'eom\'etrie et Probabilit\'es dans les Groupes}
\date{July 20, 2009}
\keywords{Lattice, locally compact group, Kac--Moody group, building, distortion}
\begin{abstract}
We show that twin building lattices are undistorted in their ambient
group; equivalently, the orbit map  of the lattice to the product of
the associated twin buildings is a quasi-isometric embedding. As a
consequence, we provide an estimate of the quasi-flat rank of these
lattices, which implies that there are infinitely many
quasi-isometry classes of finitely presented simple groups. In an
appendix, we describe how non-distortion of lattices is related to
the integrability of the structural cocycle.
\end{abstract}

\maketitle

\section{Introduction}

\subsection{Distortion} Let $G$ be a locally compact group and $\Gamma < G$ be a finitely generated lattice.
Then $G$ is compactly generated \cite[Lemma~2.12]{CaMo} and
therefore both $G$ and $\Gamma$ admit word metrics, which are well
defined up to quasi-isometry. It is a natural question to understand
the relation between the word metric of $\Gamma$ and the restriction
to $\Gamma$ of the word metric on $G$.

In order to address this issue, let us fix some compact generating
set $\widehat{\Sigma}$ in $G$ and denote by $\| g
\|_{\widehat{\Sigma}}$ the word length of an element $g \in G$ with
respect to $\widehat{\Sigma}$; we denote by $d_{\widehat{\Sigma}}$
the associated word metric. Similarly, we fix a finite generating
set $\Sigma$ for $\Gamma$ and denote by $| \gamma |_\Sigma$ the word
length of an element $\gamma \in \Gamma$ with respect to $\Sigma$,
and by $d_\Sigma$ the associated word metric. The lattice $\Gamma$
is called \textbf{undistorted} in $G$ if $d_\Sigma$ is
quasi-isometric to the restriction of $d_\Sigma$ to $\Gamma$. The
condition amounts to saying that the inclusion of $\Gamma$ in $G $
defines a quasi-isometric embedding from the metric space
$(\Gamma,d_\Sigma)$ to the metric space $(G,d_{\widehat{\Sigma}})$.

As is well-known, any cocompact lattice is undistorted: this follows
from the \v{S}varc--Milnor Lemma
\cite[Proposition~I.8.19]{Bridson-Haefliger}. The question of
distortion thus centres around non-uniform lattices. The main result
of \cite{LMR_nondistorsion} is that if $G$ is a product of
higher-rank semi-simple algebraic groups over local fields
(Archimedean or not), then any lattice of $G$ is undistorted. This
relies on the deep arithmeticity theorems due to Margulis in
characteristic 0 and Venkataramana in positive characteristic, and
on a detailed analysis of the distortion of unipotent subgroups.

Besides the higher-rank lattices in semi-simple groups, a class of
non-uniform lattices that has attracted some attention in recent
years are the so-called Kac--Moody lattices (see \cite{RemCRAS} or
\cite{CarGar}). A more general class of lattices is that of twin
building lattices \cite{CaRe}: a \textbf{twin building lattice} is
an irreducible lattice $\Gamma < G = G_+ \times G_-$ in a product of
two groups $G_+$ and $G_-$ acting strongly transitively on (locally
finite) buildings $X_+$ and $X_-$ respectively, and such that
$\Gamma$ preserves a twinning between $X_+$ and $X_-$. Recall that
$\Gamma$ is then finitely generated and that, in this general
context, \textbf{irreducible}~means that each of the projections of
$\Gamma$ to $G_\pm$ is dense.

\begin{thm}
\label{thm:KM:non-distortion}
Any twin building lattice $\Gamma < G_+ \times G_-$ is undistorted.
\end{thm}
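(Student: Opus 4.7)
The plan is to translate the statement into a problem about the orbit map of $\Gamma$ in the product of the two buildings, and then to exploit the twin structure to produce short words in $\Gamma$. Since each $G_\pm$ acts strongly transitively, with compact chamber stabilizer, on the locally finite building $X_\pm$, the \v{S}varc--Milnor lemma implies that $G$ is quasi-isometric to $X_+ \times X_-$ equipped with the sum of the combinatorial gallery distances. Consequently the inclusion $(\Gamma, d_\Sigma) \hookrightarrow (G, d_{\widehat{\Sigma}})$ is a quasi-isometric embedding if and only if the orbit map $\gamma \mapsto (\gamma c_+, \gamma c_-)$ is one, for any choice of chambers $c_\pm \in X_\pm$. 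The Lipschitz estimate $d(o, \gamma o) \leq C\,|\gamma|_\Sigma$ is automatic since each element of $\Sigma$ moves a chamber by a bounded amount; the substance of the proof is the reverse inequality.

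I would choose the basepoint $o = (c_+, c_-)$ to be a pair of opposite chambers, so that $\Gamma \cdot o$ is contained in the set $\mathrm{Opp}$ of pairs of opposite chambers, by $\Gamma$-invariance of the twinning. The geometric input is that any two elements of $\mathrm{Opp}$ can be joined by a \emph{gallery of opposite pairs} whose length is linear in the sum of the chamber distances between their components. Indeed, along a minimal gallery $c_+ = c_+^{0}, c_+^{1}, \ldots$ from $c_+$ to $\gamma c_+$ in $X_+$, each $c_+^i$ is automatically opposite to a given chamber $d \in X_-$ except in the single degenerate case where $c_+^i$ is the projection of $d$ onto the panel shared by $c_+^{i-1}$ and $c_+^i$; this defect can always be repaired by a single elementary adjacent move on the $X_-$-side. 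Iterating, one produces a sequence $(o = o_0, o_1, \ldots, o_N = \gamma o)$ in $\mathrm{Opp}$ whose consecutive pairs differ by a single ``elementary'' move and with $N = O(d(o, \gamma o))$.

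The decisive step is to realize each elementary move $o_i \to o_{i+1}$ by an element $\gamma_i$ drawn from a \emph{fixed finite} subset $F \subset \Gamma$. Once this is granted, $\gamma = \gamma_{N-1}\cdots\gamma_0$ is a word of length $N$ over $F$, and the bi-Lipschitz equivalence between the word metrics of $\Sigma$ and of the finite set $\Sigma \cup F$ yields $|\gamma|_\Sigma \leq C'\,N \leq C''\,d(o, \gamma o)$, as required. The substance of the problem thus reduces to showing that $\Gamma$ acts on the graph of opposite pairs, whose edges are elementary moves, with only finitely many orbits on edges. This is the main obstacle, and it goes beyond the bare lattice hypothesis: I expect it to follow from the irreducibility of $\Gamma$ (density of its projections to $G_\pm$) combined with the strong transitivity of $G_\pm$ on chambers, which should force $\Gamma$ to realize every $G$-configuration of local twin data around $o$ modulo the compact chamber stabilizer.
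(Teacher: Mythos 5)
Your overall architecture matches the paper's: reduce via the \v{S}varc--Milnor lemma to the orbit map into $X_+\times X_-$, base the orbit at a pair of opposite chambers, join $(c_+,c_-)$ to $(\gamma c_+,\gamma c_-)$ by a path of linear length in the graph of opposite pairs (your ``repair the single opposition defect'' step is exactly the paper's gallery-lifting Lemma~\ref{lem:key}, resting on the codistance axioms of the twinning), and then realize each elementary move by an element of a fixed finite subset of $\Gamma$. But the step you yourself call decisive is left unproved, and the mechanism you propose for it is not the right one. The ambient group $G=G_+\times G_-$ does \emph{not} preserve the twinning --- only $\Gamma$ does --- so ``every $G$-configuration of local twin data around $o$'' is not a meaningful notion, and density of the projections $\pi_\pm(\Gamma)$ in $G_\pm$ does not by itself give finitely many $\Gamma$-orbits of edges in the opposition graph. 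What actually makes this work is that a twin building lattice, by definition, carries a root group datum and hence acts \emph{strongly transitively on the twinning itself}; combined with finiteness of the stabilizer of an opposite pair and local finiteness of the buildings, this is the input the paper uses. Concretely, Lemma~\ref{lem - adjacent case} invokes the Moufang property --- simple transitivity of the root group $U_{\pm\alpha_s}$ on the chambers of a panel other than the one lying in the twin apartment --- to produce, for every opposite pair $z$ adjacent to $c$, an explicit $\sigma\in\Sigma$ with $\sigma.z=c$. Without an argument of this kind your proof does not close.

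A secondary, repairable gap: your iteration moves only the positive coordinate along a minimal gallery while repairing opposition on the negative side, so it terminates at a pair $(\gamma c_+, d)$ where $d$ is the chamber of the twin apartment $\AAA(c_+,c_-)$ opposite $\gamma c_+$, not at $(\gamma c_+,\gamma c_-)$. A second stage in the negative factor is needed (the paper instead first kills the negative displacement by some $\gamma_-$, tracks how much this perturbs the positive coordinate using the definition of $\Sigma$, and then kills the positive displacement by a $\gamma_+$ fixing $c_-$). This bookkeeping is what yields the explicit constant $2$ in Proposition~\ref{prop:OrbitMap} and must be supplied for the claim $N=O(d(o,\gamma o))$ to be justified.
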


It should be noted that each individual group $G_+$ or $G_-$ also
possesses non-uniform lattices, obtained for instance by
intersecting $\Gamma$ with a compact open subgroup (\emph{e.g.}, a
facet stabilizer) of $G_-$ or $G_+$, respectively. Other non-uniform
lattices have been constructed by R.~Gramlich and
B.~M\"uhlherr~\cite{GramlichMuehlherr}. We emphasize that, beyond
the affine case (\emph{i.e.} when $G_+$ is a semi-simple group over
a local function field), a non-unifiorm lattice in a single
irreducible factor $G_+$ (or $G_-$) should be expected to be
automatically \emph{distorted} (see Section~\ref{sec:remark} below).

\subsection{Quasi-isometry classes} Non-distortion of a lattice $\Gamma$ in $G$ relates the intrinsic geometry of $\Gamma$ to the geometry of $G$.
In the case of twin building lattices, the latter geometry is
(quasi-isometrically) equivalent to the geometry of the product
building $X_+ \times X_-$ on which $G$ acts cocompactly.
Non-distortion is especially relevant when studying quasi-isometric
rigidity of $\Gamma$ (which is still an open problem). As a
consequence of Theorem~\ref{thm:KM:non-distortion}, we can estimate
a quasi-isometric invariant of a twin building lattice $\Gamma$ for
$X_+ \times X_-$, namely the maximal dimension of
quasi-isometrically embedded flat subspaces into $(\Gamma,
d_\Sigma)$. This rank is bounded from below by the maximal dimension
of an isometrically embedded flat in $X_\pm$ and from above by twice
the same quantity (\ref{ss - ranks}); furthermore, thanks to
D.~Krammer's thesis \cite{Krammer}, this metric rank of $X_\pm$ can
be computed concretely by means of the Coxeter diagram of the Weyl
group of $X_\pm$. This enables us to draw the following
group-theoretic consequence.

\begin{cor}
\label{cor:QIclasses}
There exist infinitely many pairwise non-quasi-isometric finitely presented simple groups.
\end{cor}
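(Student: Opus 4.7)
The plan is to produce an infinite sequence $(\Gamma_n)_{n \geq 1}$ of irreducible twin building lattices which are, modulo a finite centre, finitely presented simple groups, and whose quasi-flat ranks are pairwise distinct. Since the maximal dimension of a quasi-isometrically embedded flat is a quasi-isometry invariant and since the quotient by a finite normal subgroup preserves the quasi-isometry type, this will suffice.

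First, by Krammer's thesis \cite{Krammer}, the maximal dimension of an isometrically embedded Euclidean flat in the Davis realization of a Coxeter system $(W,S)$ is determined combinatorially by the Coxeter diagram. By inserting affine sub-diagrams of arbitrarily large rank inside irreducible, non-affine Coxeter diagrams, one selects an infinite family of irreducible Coxeter systems $(W_n, S_n)$ whose associated flat ranks $r_n$ form a strictly increasing sequence satisfying $r_{n+1} > 2 r_n$.

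Second, for each such $W_n$ one realises $W_n$ as the Weyl group of a minimal Kac--Moody group $\Gamma_n$ over a finite field $\FF_{q_n}$, so that $\Gamma_n$ sits as an irreducible twin building lattice in $G_n := G_{n,+} \times G_{n,-}$, where $G_{n,\pm}$ acts strongly transitively on a building $X_{n,\pm}$ of type $W_n$. For $q_n$ chosen sufficiently large, $\Gamma_n$ is finitely presented and its quotient $\overline{\Gamma}_n := \Gamma_n / Z(\Gamma_n)$ by its finite centre is (abstractly) simple; this uses simplicity results for Kac--Moody lattices developed in \cite{CaRe} and the references therein.

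Finally, the locally compact group $G_n$ acts properly and cocompactly by isometries on the \cat product $X_{n,+} \times X_{n,-}$, which has quasi-flat rank equal to $2 r_n$ (twice the flat rank of a single factor). By Theorem~\ref{thm:KM:non-distortion}, $\Gamma_n$ embeds quasi-isometrically in $G_n$, hence in $X_{n,+} \times X_{n,-}$; combined with the general rank bounds recalled in~\ref{ss - ranks}, the quasi-flat rank of $\Gamma_n$ lies in the interval $[r_n, 2 r_n]$. The condition $r_{n+1} > 2 r_n$ makes these intervals pairwise disjoint, so the quasi-flat ranks of the $\overline{\Gamma}_n$ are pairwise distinct, which proves the corollary. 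The main obstacle is not the non-distortion theorem itself but the simultaneous realisation of all three constraints at every level $n$---unbounded flat rank originating from non-affine Coxeter diagrams, finite presentation of the lattice, and simplicity modulo a finite centre---which forces a careful coordinated choice of the type $W_n$ and of the base field $\FF_{q_n}$.
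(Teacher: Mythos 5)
Your proposal is correct and follows essentially the same route as the paper: combine the non-distortion theorem with the flat-rank estimate $R \leqslant r \leqslant 2R$ of Corollary~\ref{cor:FlatRank}, produce irreducible non-affine types whose flat ranks grow (the paper uses diagrams containing more and more commuting $\widetilde A_2$-subdiagrams), and invoke the simplicity theorem of \cite{CaRe}. Your explicit requirement $r_{n+1} > 2r_n$ to make the intervals $[r_n, 2r_n]$ disjoint, and your attention to finite presentation and to passing to the quotient by the finite centre, merely spell out details the paper leaves implicit.
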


This corollary may also be deduced from the work of J.~Dymara and Th.~Schick \cite{DymaraSchick}, which gives an estimate of another quasi-isometry invariant for twin building lattices, namely the \emph{asymptotic dimension}.

Any finite simple group is of course quasi-isometric to the trivial
group. Moreover any finitely presented simple group constructed by
M.~Burger and Sh.~Mozes \cite{Burger-Mozes2} is quasi-isometric to
the product of free groups $F_2 \times F_2$; this is due to
\cite{Papasoglu} and to the fact that the latter groups are
constructed as suitable (torsion-free) uniform lattices in products
of trees. Furthermore, concerning the finitely presented simple
groups constructed by G.~Higman and R.~Thompson \cite{Higman}, as
well as their avatars in \cite{Rover}, \cite{Brin},
\cite{BrownSimple} and \cite{ScottSimple}, we are not aware of a
classification up to quasi-isometry as of today. However some
results seem to indicate that many of them might be quasi-isometric
to one another, compare \emph{e.g.}~\cite{BurilloClearyStein}.

\subsection{Integrability of the structural cocycle}
Non-distortion of lattices is also relevant, in a more subtle way, to the theory of unitary representations and its applications.
More precisely, given a lattice $\Gamma < G$ and a unitary $\Gamma$-representation $\pi$, one considers the induced $G$-representation $\mathrm{Ind}_\Gamma^G \pi$.
For rigidity questions (at least) and also because the structure of $G$ is richer than that of $\Gamma$, it is desirable that the cocycles of $\Gamma$ with coefficients in $\pi$ extend to continuous cocycles of $G$ with coefficients in $\mathrm{Ind}_\Gamma^G \pi$.
As explained in \cite[Proposition~1.11]{Shalom00}, a sufficient condition for this to hold is that $\Gamma$ be \textbf{square-integrable}.
By definition, for any $p \in [1;\infty)$ it is said that $\Gamma$ (or more precisely the inclusion $\Gamma < G$) is \textbf{$p$-integrable}~if there is a Borel fundamental domain $\Omega \subset G$ for $G/\Gamma$ such that, for each $g \in G$, we have:

$$\int_\Omega \bigl( | \alpha(g, h) |_\Sigma \bigr)^p \mathrm{d}h < \infty,$$

\noindent where $\alpha : G \times \Omega \to \Gamma$ is the induction cocycle defined by $\alpha(g, h) = \gamma \Leftrightarrow g h \gamma \in \Omega$. Mimicking Y.~Shalom's arguments in \cite[\S2]{Shalom00},  the following statement will be established in an appendix (with the above notation for generating sets).

\begin{thm}
\label{th:InductionCocycle}
Let $G$ be a totally disconnected locally compact group and let $\Gamma < G$ be a finitely generated lattice.
Assume there is a Borel fundamental domain $\Omega \subset G$ for $G/\Gamma$ such that for some $p \in [1;\infty)$ we have:
$$\int_\Omega \bigl( \| h \|_{\widehat{\Sigma}} \bigr)^p \mathrm{d}h < \infty.$$
Then, if $\Gamma$ is non-distorted, it is $p$-integrable
\end{thm}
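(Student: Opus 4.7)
The plan is to chain together three ingredients: the quasi-isometric inequality coming from non-distortion, the triangle inequality for the compact word length $\| \cdot \|_{\widehat{\Sigma}}$, and the fact that the ``fundamental-domain return map'' is Haar-measure-preserving on $\Omega$. Note that since $\Gamma$ is a lattice, $G$ is unimodular and $\Omega$ has finite Haar measure.

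First, non-distortion of $\Gamma$ in $G$ provides constants $A,B>0$ such that
$$|\gamma|_\Sigma \;\le\; A\,\|\gamma\|_{\widehat{\Sigma}} + B \qquad \text{for every } \gamma \in \Gamma.$$
Second, for $h \in \Omega$ and $g \in G$, set $\gamma = \alpha(g,h)$, so that $gh\gamma \in \Omega$. The triangle inequality for the word length $\|\cdot\|_{\widehat{\Sigma}}$ (with a symmetric $\widehat{\Sigma}$) gives
$$\|\gamma\|_{\widehat{\Sigma}} \;=\; \|(gh)^{-1}(gh\gamma)\|_{\widehat{\Sigma}} \;\le\; \|g\|_{\widehat{\Sigma}} + \|h\|_{\widehat{\Sigma}} + \|gh\gamma\|_{\widehat{\Sigma}}.$$
Combining these two inequalities and using the elementary bound $(x_1+\dots+x_k)^p \le k^{p-1}(x_1^p+\dots+x_k^p)$, we obtain
$$|\alpha(g,h)|_\Sigma^p \;\le\; C_p\bigl(\|g\|_{\widehat{\Sigma}}^p + \|h\|_{\widehat{\Sigma}}^p + \|gh\,\alpha(g,h)\|_{\widehat{\Sigma}}^p + B^p\bigr),$$
for a constant $C_p$ depending only on $p$, $A$, $B$.

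It remains to integrate this over $h \in \Omega$ and check that each term is finite. The constants $\|g\|_{\widehat{\Sigma}}^p$ and $B^p$ integrate to finite quantities because $\Omega$ has finite Haar measure. The second term is controlled by the hypothesis $\int_\Omega \|h\|_{\widehat{\Sigma}}^p\,\mathrm{d}h < \infty$. For the third term, one observes that the map
$$T_g : \Omega \longrightarrow \Omega, \qquad h \longmapsto g\,h\,\alpha(g,h)$$
is a Borel bijection of $\Omega$ onto itself modulo a null set (it realizes the $G$-translation action of $g$ on the space $G/\Gamma \cong \Omega$), and it preserves the restriction of Haar measure to $\Omega$ by left-invariance of Haar measure on the unimodular group $G$. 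Consequently,
$$\int_\Omega \|gh\,\alpha(g,h)\|_{\widehat{\Sigma}}^p\,\mathrm{d}h \;=\; \int_\Omega \|h'\|_{\widehat{\Sigma}}^p\,\mathrm{d}h' \;<\; \infty,$$
again by the hypothesis. This yields $\int_\Omega |\alpha(g,h)|_\Sigma^p\,\mathrm{d}h < \infty$, as required.

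The only real subtlety is justifying that $T_g$ preserves Haar measure on $\Omega$; this is the point where Shalom's argument is used more or less verbatim, and where the assumption that $G$ is totally disconnected (ensuring, together with compactness of facet stabilizers in the intended applications, that $\widehat{\Sigma}$ can be chosen as a compact open subgroup and that $\alpha$ is Borel) simplifies the measure-theoretic bookkeeping. Apart from this, the proof is the direct transcription of the three inequalities above.
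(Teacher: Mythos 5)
Your proof is correct, but it takes a genuinely different route from the paper's. The paper does not work with the given domain $\Omega$ at all: it builds a special ``Dirichlet-type'' fundamental domain $\mathscr{F}$ out of a Schreier graph $\mathfrak{g}=G/U$, choosing in each $\Gamma$-orbit of vertices a representative $v_i$ at minimal distance from the base vertex $v_0$. That minimality gives directly $d_\g(v_0, hg.v_0)\geqslant d_\g(v_0,v_i)$, whence $|\alpha(g,h)|_\Sigma \leqslant 2L\,d_\g(v_0,hg.v_0)\leqslant 2L(\|g\|_{\widehat\Sigma}+\|h\|_{\widehat\Sigma})$ with no third term; the hypothesis on $\Omega$ is then only used, via a separate lemma and the decomposition $G=\bigsqcup_\gamma \gamma^{-1}\Omega$, to transfer the finiteness of $\int \|h\|_{\widehat\Sigma}^p$ from $\Omega$ to $\mathscr{F}$. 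You instead keep $\Omega$ itself, accept the extra term $\|gh\alpha(g,h)\|_{\widehat\Sigma}$ coming from the triangle inequality, and kill it by observing that $h\mapsto gh\alpha(g,h)$ is the return map realizing the translation action of $g$ on $G/\Gamma\cong\Omega$, hence measure-preserving (this does use unimodularity, which you correctly invoke; the paper uses the same unimodularity in its transfer lemma). Both arguments rest on the same two pillars --- the upper bound $|\gamma|_\Sigma\leqslant A\|\gamma\|_{\widehat\Sigma}+B$ from non-distortion, and word-length subadditivity in $G$ --- but yours avoids the Schreier-graph construction entirely and proves the slightly stronger statement that the \emph{given} $\Omega$ already witnesses $p$-integrability, whereas the paper's version produces a canonical adapted domain $\mathscr{F}$ (which is then reused for the twin-building application). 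One cosmetic remark: your closing aside suggesting $\widehat\Sigma$ is a compact open subgroup is not quite what the paper does ($\widehat\Sigma=UCU$ is merely a compact symmetric generating set bi-invariant under a compact open $U$), and the Borel measurability of $h\mapsto\alpha(g,h)$ is immediate from $\{h:\alpha(g,h)=\gamma\}=\Omega\cap g^{-1}\Omega\gamma^{-1}$ with $\Gamma$ countable; neither point affects your argument.
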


For $S$-arithmetic groups, the existence of fundamental domains satisfying the condition of Theorem~\ref{th:InductionCocycle} is established in \cite[Proposition~VIII.1.2]{Margulis} by means of Siegel domains.
As we shall see, in the case of twin building lattices the condition is straightforward to check once a fundamental domain provided by the specific combinatorial properties of these lattices is used.
In particular, combining Theorem~\ref{thm:KM:non-distortion} with Theorem~\ref{th:InductionCocycle}, we recover the main result of~\cite{Remy:Integrable}.
We finish by mentioning that square-integrability of lattices is also relevant for lifting $\Gamma$-actions to $G$-actions in geometric situations which are much more general than unitary actions on Hilbert spaces, see \cite{Monod_superrigid} and \cite{Gelander-Karlsson-Margulis}.

\medskip

In order to always start from the same situation, in the above
introduction we stated results exclusively dealing with group
inclusions. The proof of the non-distortion statement is of
geometric nature: we prove that a twin building lattice is
non-distorted in the product of the two buildings with which it is
associated.

\medskip
This article is written as follows. Section~\ref{sec:prel} consists
of preliminaries. Section~\ref{s - non-distortion} provides the
aforementioned geometric proof of non-distortion and deals with the
various metric notions of ranks that can be better understood thanks
to non-distortion; we apply this to quasi-isometry classes of
finitely generated simple groups. Appendix~\ref{sec:integrability}
is independent of the previous setting of twin building lattices and
establishes a relationship between non-distortion and
square-integrability of lattices in general totally disconnected
locally compact groups.

\section{Lifting galleries from the buildings to the lattice}\label{sec:prel}

We refer to \cite{AbrBrown} for basic definitions and facts on
buildings and twinnings, and to \cite{CaRe} for twin building
lattices. In this preliminary section, we merely fix the notation
and recall one basic fact on twin buildings which plays a key role
at different places in this paper.

Let $X = (X_+, X_-)$ be a twin building with Weyl group $W$ associated to a group $\Gamma$ admitting a root group datum. In particular $\Gamma$ acts strongly transitively on $X$. We let $d_{X_+}$  (resp.~$d_{X_-}$) denote the combinatorial distance on the set of chambers of $X_+$ (resp.~$X_-$). We further denote by $S$ the canonical generating set of $W$ and by $\Op(X)$ the set of pairs of opposite chambers of $X$. Throughout the paper, we fix a base pair $(c_+, c_-)\in \Op(X)$ and call it the \textbf{fundamental opposite pair} of chambers. Two opposite pairs $(x_+, x_-)$ and $(y_+, y_-) \in \Op(X)$ are called \textbf{adjacent} if there is some $s \in S$ such that $x_+$ is $s$-adjacent to $y_+$ and $x_-$ is $s$-adjacent to $y_-$. Recall that an opposite pair $x \in \Op(X)$ is contained in unique twin apartment, which we shall denote by $\AAA(x) = \AAA(x_+, x_-)$. The positive (resp. negative) half of $\AAA(x)$ is denoted by $\AAA(x)_+$ (resp.~$\AAA(x)_-$).

\medskip
The following key property is well known to the experts, and appear implicitly in the proof of Proposition~5 in \cite{Tits88/89}.
\begin{lem}\label{lem:key}
Let $\vareps \in \{+, -\}$. Given any gallery $(x_0, x_1, \dots, x_n)$ in $X_\vareps$ and any chamber $y_0 \in X_{-\vareps}$ opposite $x_0$, there exists a gallery $(y_0, y_1, \dots, y_n)$ in $X_{-\vareps}$ such that the following hold for all $i = 1, \dots, n$:
\begin{enumerate}
\item $(x_i, y_i)\in \Op(X)$;
\item $(x_i, y_i)$ is adjacent to $(x_{i-1}, y_{i-1})$;
\item  $y_i$ belongs to the twin apartment $\AAA(x_0, y_0)$.
\end{enumerate}
\end{lem}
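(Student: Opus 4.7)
I would argue by induction on $n$. The base case $n=0$ is vacuous.

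For the inductive step, suppose $(y_0, \ldots, y_{i-1})$ satisfying (i)--(iii) has been constructed, and let $s \in S$ be such that $x_{i-1}$ and $x_i$ are $s$-adjacent. The chamber $y_{i-1}$ lies in $\AAA(x_0, y_0)_{-\vareps}$, a Coxeter complex of type $(W,S)$, so the $s$-panel of $y_{i-1}$ intersects $\AAA(x_0, y_0)_{-\vareps}$ in exactly two chambers. Denote by $y'$ the one distinct from $y_{i-1}$, and set
\[
y_i := \begin{cases} y_{i-1}, & \text{if } (x_i, y_{i-1}) \in \Op(X), \\ y', & \text{otherwise.} \end{cases}
\]
Condition~(ii) holds because $y_i$ is $s$-adjacent to $y_{i-1}$ (trivially in the first case, by construction in the second), and (iii) is immediate. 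Condition~(i) is automatic when $y_i = y_{i-1}$.

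The one substantive verification is (i) in the case $y_i = y'$, that is, when $y_{i-1}$ is \emph{not} opposite $x_i$. Here I would invoke the ``twin'' gate lemma, a direct consequence of the codistance axioms of twin buildings (see \cite{AbrBrown}): given a chamber $c \in X_\vareps$ that is opposite at least one chamber of a spherical residue $R \se X_{-\vareps}$, the restriction $\delta^\ast(c,\cdot)|_R$ attains its \emph{maximum} Weyl-length at a unique chamber of $R$. Applying this with $c = x_i$ and $R$ the $s$-panel of $y_{i-1}$: by the codistance axioms the values $\delta^\ast(x_i, \cdot)$ on $R$ lie in $\{1_W, s\}$ with both values attained, so $x_i$ is opposite some chamber of $R$; the gate lemma then asserts that the longer value $s$ is attained exactly once on $R$. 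Since $y_{i-1}$ already achieves codistance $s$ by assumption, $y_{i-1}$ itself is that unique non-opposite chamber, and hence every other chamber of $R$ --- in particular $y' = y_i$ --- is opposite $x_i$, as required.

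The essential point, and the step I would expect to require the most thought, is the \emph{direction} of the gate property in a twin building: within an $s$-panel on one side, opposition to a fixed chamber on the other side is the \emph{generic} situation, with a single exception. This is precisely what permits $y_i$ to be located inside the prescribed apartment $\AAA(x_0, y_0)$, regardless of whether $x_i$ itself lies in $\AAA(x_0,y_0)_\vareps$.
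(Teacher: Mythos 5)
Your proposal is correct and takes essentially the same route as the paper: the same inductive construction, keeping $y_i = y_{i-1}$ when it is already opposite $x_i$ and otherwise moving to the unique other chamber of the $s$-panel lying in the twin apartment $\AAA(x_0,y_0)$. The only difference is cosmetic: where the paper writes ``it follows from the axioms of a twinning that $y_i$ is opposite $x_i$,'' you spell this out via the gate property of the codistance on the $s$-panel (equivalently, a direct application of axiom (Tw2), since $\delta^\ast(x_i,y_{i-1})=s$ forces every chamber $s$-adjacent to $y_{i-1}$ to have codistance $ss=1_W$ to $x_i$).
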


\begin{proof}
The desired gallery is constructed inductively as follows. Let
$i>0$. If $y_{i-1}$ is opposite $x_i$, then set $y_i = y_{i-1}$.
Otherwise the codistance $\delta^*(x_i, y_{i-1})$ is an element $s
\in S$ and there is a unique chamber in the twin apartment
$\AAA(x_0, y_0)$ which is $s$-adjacent to $y_{i-1}$. Define $y_i$ to
be that chamber. It follows from the axioms of a twinning that $y_i$
is opposite $x_i$. The gallery $(y_0, y_1, \dots, y_n)$ constructed
in this way satisfies all the desired properties.
\end{proof}

\section{Non-distortion of twin building lattices}
\label{s - non-distortion}

In this section, we show that a twin building lattice is
non-distorted for its natural diagonal action on its two twinned
building. The arguments are elementary and  use the basic
combinatorial geometry of buildings.

\subsection{An adapted generating system}
\label{ss - generating system}

Let $\Sigma$ denote the subset of $\Gamma$ consisting of those elements $\gamma$ such that $(\gamma.c_+, \gamma.c_-)$ is adjacent to $(c_+, c_-)$, where $(c_+, c_-) \in \Op(X)$ denotes the fundamental opposite pair. Notice that
$$\max\{ d_{X_+}(c_+, \gamma.c_+); d_{X_-}(c_-, \gamma.c_-)\}\leqslant 1$$
for all $\gamma \in S$.

The graph structure on $\Op(X)$ induced by the aforementioned adjacency relation is isomorphic to the Cayley graph associated to the pair $(\Gamma, \Sigma)$. Lemma~\ref{lem:key} readily implies that this graph is connected. Thus $\Sigma$ is a generating set for $\Gamma$.

\begin{lem}
\label{lem - adjacent case}
Let $z=(z_+,z_-)$ be a pair of opposite chambers such that
$$\max\{ d_{X_+}(c_+, z_+); d_{X_-}(c_-, z_-)\}\leqslant 1.$$
Then there exists $\sigma \in \Sigma$ such that $\sigma.z=c$.
\end{lem}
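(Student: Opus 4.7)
The plan is to combine strong transitivity of $\Gamma$ on $\Op(X)$ with a direct verification of adjacency. Strong transitivity produces some $\gamma\in\Gamma$ with $\gamma\cdot z = c$. Since the adjacency relation on $\Op(X)$ is $\Gamma$-invariant and symmetric, and since $\Sigma$ is visibly symmetric ($\Sigma=\Sigma^{-1}$), the condition $\gamma\in\Sigma$ is equivalent to the pair $z=\gamma^{-1}\cdot c$ being itself adjacent to $c$. The lemma therefore reduces to producing some $s\in S$ for which $z_+$ is $s$-adjacent to $c_+$ \emph{and} $z_-$ is $s$-adjacent to $c_-$.

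If $z_\varepsilon = c_\varepsilon$ for some $\varepsilon\in\{+,-\}$ the matching is essentially for free: the equal coordinate is $s$-adjacent to itself for every $s\in S$, and one selects $s\in S$ to accommodate the other coordinate (any $s$ works if both coordinates coincide with $c$). The substantive case is thus $z_+\neq c_+$ and $z_-\neq c_-$: then there are uniquely determined $s_\pm\in S$ with $z_\pm$ lying in the $s_\pm$-panel through $c_\pm$, and the crux of the argument is to show $s_+ = s_-$.

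For this I would invoke two standard consequences of the twinning axioms. First, the gate/projection property applied to $(c_+,c_-)\in\Op(X)$ says that $c_-$ is the unique chamber opposite $c_+$ inside the $s_-$-panel through $c_-$; since $z_-\neq c_-$ lies in that panel, this forces $\delta^*(c_+,z_-)=s_-$. Second, propagating the $s_+$-adjacency $c_+\sim_{s_+} z_+$ through the standard rule for how $\delta^*$ changes under panel adjacency in the left argument yields
\[
\delta^*(z_+,z_-)\;\in\;\{\,s_-,\;s_+ s_-\,\}.
\]
Opposition of $z$ forces $\delta^*(z_+,z_-)=1_W$; since $s_-\in S$ is non-trivial, the only possibility is $s_+s_-=1_W$, i.e.\ $s_+=s_-$. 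Taking $s:=s_+=s_-$ then witnesses adjacency of $z$ and $c$, so $\gamma\in\Sigma$ and one sets $\sigma:=\gamma$.

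The main obstacle is precisely this identification $s_+ = s_-$: it does not follow from Lemma~\ref{lem:key} alone, and requires combining uniqueness of the opposite chamber inside a panel of its twin (a consequence of the gate property) with the codistance transformation rule under panel adjacency. Both facts are part of the standard twin-building toolkit and are harmless to quote from~\cite{AbrBrown}; the remainder of the proof is then purely formal.
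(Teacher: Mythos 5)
Your reduction is sound: strong transitivity of $\Gamma$ on $\Op(X)$ does yield $\gamma$ with $\gamma.z=c$, and $\Gamma$-invariance together with the symmetry of the adjacency relation correctly reduces the lemma to showing that $z$ is adjacent to $c$. The gap is in the crux step. The fact you attribute to the gate/projection property is false, and in fact backwards: for a panel $P$ of $X_-$ and a chamber $x\in X_+$ with $\delta^*(x,y)=1_W$ for some $y\in P$, the twin building axioms show that \emph{exactly one} chamber of $P$ fails to be opposite $x$ (the coprojection, at codistance $s$), while \emph{all the others are} opposite $x$. Uniqueness of opposites holds inside a fixed twin apartment, not inside a panel; you have conflated the two. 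Hence $\delta^*(c_+,z_-)=s_-$ does not follow (generically $\delta^*(c_+,z_-)=1_W$), and the chain leading to $s_+=s_-$ collapses.

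Worse, the identity $s_+=s_-$ you aim for is false in general, so no repair along these lines is possible. Already in a thick twin tree ($S=\{s,t\}$, panels of size $q+1\geqslant 3$): only one chamber of the $t$-panel of $c_+$ fails to be opposite $c_-$, so choose $z_+\neq c_+$ in that panel opposite $c_-$; then only one chamber of the $s$-panel of $c_-$ fails to be opposite $z_+$, and $c_-$ is opposite $z_+$, so choose $z_-\neq c_-$ in that panel opposite $z_+$. The pair $(z_+,z_-)$ satisfies the hypotheses of the lemma with $s_+=t\neq s=s_-$, so it is not adjacent to $c$ in the strict single-type sense. The membership $\sigma\in\Sigma$ must therefore be understood through the distance condition $\max\{d_{X_+}(c_+,\sigma.c_+),\, d_{X_-}(c_-,\sigma.c_-)\}\leqslant 1$ recorded just after the definition of $\Sigma$ --- which is also all that the paper's own proof verifies; that proof is moreover constructive, exhibiting $\sigma$ as a product $u_-u_+$ of at most two root group elements via the Moufang property rather than invoking transitivity on $\Op(X)$. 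With the distance reading, your transitivity argument already finishes the proof and the ``crux'' is unnecessary; with your strict reading, the crux is irreparable.
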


\begin{proof}
It is enough to deal with the case when $\max\{ d_{X_+}(c_+, z_+); d_{X_-}(c_-, z_-)\} = 1$.

If both $z_-$ and $z_+$ belong to the twin apartment $\mathbb{A} =
\mathbb{A}_- \sqcup \mathbb{A}_+$, we can write $z_+=w_+.c_+$ and
$z_-=w_-.c_-$ for $w_\pm \in W$ uniquely defined by $z_\pm$. Since
$z_-$ and $z_+$ are assumed to be opposite, the codistance
$\delta^*(z_-,z_+)$ is by definition equal to $1_W$. Since the
diagonal $\Gamma$-action on $X_- \times X_+$ preserves codistances,
we deduce that $w_+=w_-$. At last since $\max\{ d_{X_+}(c_+, z_+);
d_{X_-}(c_-, z_-)\} = 1$, we deduce that there exists a canonical
reflection $s \in S$ such that $w_\pm=s$ and this reflection is
represented by an element $n_s \in {\rm Stab}_\Gamma(\mathbb{A})$;
we clearly have $n_s \in \Sigma$.

We henceforth deal with the case when at least one of the elements
$z_\pm$ does not lie in $\mathbb{A}$. Up to switching signs, we may
-- and shall -- assume that $z_- \not\in \mathbb{A}_-$. Let $s$ be
the canonical reflection such that $z_-$ is $s$-adjacent to $c_-$.
By the Moufang property, the group $U_{-\alpha_s}$ acts simply
transitively on the chambers  $\neq c_-$ which are $s$-adjacent to
$c_-$. By conjugating by an element $n_s$ as above and since $z_-
\neq s.c_-$ (because $z_- \not\in \mathbb{A}_-$), we conclude that
there exists $u_+ \in U_{\alpha_s} \setminus \{Ê1 \}$ such that
$u_+.z_-=c_-$. Moreover $u_+$ stabilizes $c_+$ so the chamber
$u_+.z_+$ is adjacent to $c_+$.

If $u_+.z_+ \in \mathbb{A}_+$, then since the $\Gamma$-action preserves the codistance, the chamber $u_+.z_+ \in \mathbb{A}_+$ is the unique chamber in $\mathbb{A}$ which is opposite $c_-=u_+.z_-$, namely $c_+$; we are thus done in this case because we clearly have $u_+ \in \Sigma$.

We finish by considering the case when $u_+.z_+ \not\in \mathbb{A}_+$.
Then there exists some canonical reflection $t \in S$ such that $u_+.z_+$ is $t$-adjacent to $c_+$ and we can find similarly an element $u_- \in U_{-\alpha_t} \setminus \{Ê1 \}$ such that $u_-.(u_+.z_+)=c_+$.
Setting $\sigma = u_-u_+$, we obtain an element of $\Gamma$ sending $z_\pm$ to $c_\pm$.
Since the $\Gamma$-action preserves each adjacency relation, hence the combinatorial distances, we have $\sigma \in \Sigma$ because
$d_{X_-}(c_-,\sigma.c_-) = d_{X_-}(u_-^{-1}.c_-, u_+.c_-) = d_{X_-}(c_-, u_+.c_-) = 1$ and
$d_{X_+}(c_+,\sigma.c_+) = d_{X_+}(c_+, u_-.c_+) = 1$.
\end{proof}

\subsection{Proof of non-distortion}
\label{ss - proof}
We define the combinatorial distance $d_X$ of the chamber set of $X$ by
$$d_X\big((x_+,x_-), (y_+,y_-)\big) = d_{X_+}(x_+, y_+) + d_{X_-}(x_-, y_-).$$

Since the $G$-action on $X$ is cocompact, it follows from the \v{S}varc--Milnor lemma
\cite[Proposition~I.8.19]{Bridson-Haefliger} that $G$ is quasi-isometric to $X$. Hence
Theorem~\ref{thm:KM:non-distortion} is an immediate consequence of the following.

\begin{prop}
\label{prop:OrbitMap}
Let $\Gamma < G = G_+ \times G_-$ be a twin building lattice associated with the twin building $X = X_+ \times X_-$ and let $c = (c_+, c_-) \in X$ be a pair of opposite chambers.
Then for each $\gamma \in \Gamma$, we have:
$${1 \over 2}  d_X(c, \gamma.c) \, \leqslant \, |\gamma |_\Sigma \, \leqslant \, 2  d_X(c, \gamma.c).$$
\end{prop}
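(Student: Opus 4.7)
The lower bound is immediate. Every $\sigma\in\Sigma$ satisfies $d_{X_\pm}(c_\pm,\sigma.c_\pm)\leq 1$; writing a minimal expression $\gamma=\sigma_1\cdots\sigma_k$ with $k=|\gamma|_\Sigma$ and applying the triangle inequality coordinate-wise along the trajectories of $c_+$ and $c_-$ yields $d_{X_\pm}(c_\pm,\gamma.c_\pm)\leq k$. Summing the two sides gives $d_X(c,\gamma.c)\leq 2|\gamma|_\Sigma$.

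For the upper bound, set $n_\pm:=d_{X_\pm}(c_\pm,\gamma.c_\pm)$. The strategy is to build an explicit word in $\Sigma$ representing $\gamma$ of length at most $2(n_++n_-)=2\,d_X(c,\gamma.c)$, via a double application of Lemma~\ref{lem:key}. First, take a minimal gallery $c_+=x_0,\dots,x_{n_+}=\gamma.c_+$ in $X_+$ and lift it via Lemma~\ref{lem:key} starting from $y_0=c_-$: the resulting sequence of opposite pairs $(x_i,y_i)$, with each $(x_i,y_i)$ adjacent to $(x_{i-1},y_{i-1})$, yields a Cayley-graph path of length $n_+$ from $c$ to $(\gamma.c_+,y)$, where $y:=y_{n_+}\in\AAA(c_+,c_-)_-$. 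Since the $y_i$'s form a gallery of length at most $n_+$ in $X_-$ starting at $c_-$, the triangle inequality gives $d_{X_-}(y,\gamma.c_-)\leq n_++n_-$. Second, take a minimal gallery $y=u_0,\dots,u_m=\gamma.c_-$ in $X_-$ (where $m\leq n_++n_-$) and lift it via Lemma~\ref{lem:key} starting from $v_0=\gamma.c_+$, which is opposite $u_0=y$; this extends the previous path by $m$ further Cayley-graph steps, ending at $(v_m,\gamma.c_-)$. The concatenated path has length $n_++m\leq 2(n_++n_-)$.

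\textbf{The main obstacle} is to verify that the endpoint $(v_m,\gamma.c_-)$ of this concatenated path is actually $\gamma.c$, i.e.\ that $v_m=\gamma.c_+$. Both $v_m$ and $\gamma.c_+$ lie in the twin apartment $\AAA(y,\gamma.c_+)_+$ and are opposite $u_m=\gamma.c_-$; the deterministic rule governing the lift---at each step either freezing $v_{i-1}$ or moving it by one $s$-adjacency inside $\AAA(y,\gamma.c_+)$ according to the codistance $\delta^*(v_{i-1},u_i)$---should pin down $v_m=\gamma.c_+$ via the rigidity of codistances in a twin apartment. If any residual discrepancy persists, it is bounded in $d_{X_+}$ by $m$ and lies in the stabilizer of an opposite pair; by Lemma~\ref{lem - adjacent case} such a stabilizer is contained in $\Sigma$, so this discrepancy costs at most a bounded additive correction, easily absorbed within the factor of two in the upper bound.
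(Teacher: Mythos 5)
Your lower bound and the overall two-stage gallery-lifting strategy are essentially those of the paper, but the step you yourself flag as ``the main obstacle'' is a genuine gap, and neither of your two suggested ways out works. First, the identity $v_m=\gamma.c_+$ does not follow from ``rigidity of codistances in a twin apartment'': uniqueness of opposites in a twin apartment applies to pairs of chambers both lying in that twin apartment, and $\gamma.c_-$ belongs to $\AAA(y,\gamma.c_+)_-$ only in the degenerate case $y=\gamma.c_-$. In general, being opposite $\gamma.c_-$ is a very weak constraint: on any panel of $X_+$ containing one chamber opposite a fixed $x_-\in X_-$, \emph{all chambers but one} are opposite $x_-$, so $\AAA(y,\gamma.c_+)_+$ may a priori contain many chambers opposite $\gamma.c_-$, and nothing in Lemma~\ref{lem:key} forces the lift to terminate at $\gamma.c_+$ rather than at one of the others. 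Second, the fallback is incorrect: if $v_m\neq\gamma.c_+$, the element carrying $(v_m,\gamma.c_-)$ to $(\gamma.c_+,\gamma.c_-)$ fixes only the negative coordinate, so it does not lie in the stabilizer of any opposite pair and is not a single generator (Lemma~\ref{lem - adjacent case} says nothing about it); its cost is governed by $d_{X_+}(v_m,\gamma.c_+)$, which you can only bound by $m\leqslant n_++n_-$. Paying for this residue by any method then pushes the total to order $n_++2m$, which can reach $3n_++2n_-$ and is not absorbed within the factor $2$.

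The paper sidesteps exactly this difficulty by ordering the two corrections the other way around, so that the second lift \emph{terminates at the base pair} $c$ rather than at $\gamma.c$. Setting $x=\gamma^{-1}.c$, one first builds $\gamma_-$ with $\gamma_-.x_-=c_-$ and $|\gamma_-|_\Sigma\leqslant d_{X_-}(c_-,x_-)$, then lifts a minimal gallery in $X_+$ from $\gamma_-.x_+$ \emph{to} $c_+$, starting the lift at $c_-$ (which is opposite $\gamma_-.x_+$). The endpoint of that lift is a chamber of the twin apartment opposite $c_+$, hence equal to $c_-$ by uniqueness of opposites there; the resulting word therefore sends $x$ exactly to $c$, and the leftover element fixes the pair $c$, lies in $\Sigma$, and is absorbed into the last letter. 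The count $n_-+(n_++n_-)\leqslant 2\,d_X(c,\gamma.c)$ then closes the argument. To repair your version you would need either to reproduce this arrangement (aim the second lift at the distinguished pair, where the uniqueness of the opposite is available), or to prove an invariance statement guaranteeing that the lift of Lemma~\ref{lem:key} ends at $\gamma.c_+$; as written, neither is done.
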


\begin{proof}
[Proof of Proposition~\ref{prop:OrbitMap}]
Writing $\gamma \in \Gamma$ as a product of $|\gamma |_\Sigma$ elements of the generating set $\Sigma$ and using triangle inequalities, we obtain
$$d_X(c, \gamma.c) \leqslant 2  |\gamma |_\Sigma$$
by the definition of $d_X$ and of $\Sigma$.

\smallskip

It remains to prove the other inequality, which says that $\Gamma$-orbits spread enough in $X$.
We set $x = (x_+, x_-) = \gamma^{-1}.c$.
Let us pick a minimal gallery in $X_-$, from $x_-$ to $c_-$.
Using auxiliary positive chambers, one opposite for each chamber of the latter gallery, a repeated use of Lemma \ref{lem - adjacent case} shows that there exists $\gamma_- \in \Gamma$ such that $\gamma_-.x_- = c_-$ and

\medskip

\hspace{1cm}$(*)$ \quad $|\gamma_- |_\Sigma \, \leqslant \, d_{X_-}(c_-, x_-)$.

\medskip

\noindent Moreover as in the first paragraph, we have:

\medskip

\hspace{1cm}$(**)$ \quad $d_{X_+}(c_+, \gamma_-.c_+) \leqslant |\gamma_-|_\Sigma$,

\medskip

\noindent by the definition of $\Sigma$. We deduce:

$$\begin{array}{rcl}
d_{X_+}(c_+, \gamma_-.x_+) & \leqslant
& d_{X_+}(c_+, \gamma_-.c_+) + d_{X_+}(\gamma_-.c_+,  \gamma_-.x_+)\\
& \leqslant & |\gamma_-|_\Sigma + d_{X_+}(c_+, x_+)\\
& \leqslant & d_{X_-}(c_-, x_-) + d_{X_+}(c_+, x_+),
\end{array}$$

\noindent successively by the triangle inequality, by $(**)$ and the fact the $\Gamma$-action is isometric for the combinatorial distances on chambers, and by $(*)$.
Therefore, by definition of $d_X$, we already have:

\medskip

\hspace{1cm}$(***)$ \quad $d_{X_+}(c_+, \gamma_-.x_+) \leqslant d_{X}(c, x)$.

\smallskip

We now construct a suitable element $\gamma_+ \in \Gamma$ such that $\gamma_+.x_+ = c_+$ and $\gamma_+.c_- = c_-$.
Let $\gamma_-. x_+ = z_0, z_1, \dots, z_k = c_+$ be a minimal gallery in $X_+$ from $\gamma_-. x_+$ to $c_+$.
Let $\mathbb{A} = \mathbb{A}_+ \sqcup \mathbb{A}_-$ be the twin apartment defined by the opposite pair $c=(c_+, c_-)$.
Let  $c_0=c_-, c_1, \dots, c_k$ be the gallery contained in $\mathbb{A}_+$ and associated to $ z_0, z_1, \dots, z_k = c_+$ as in Lemma~\ref{lem:key}.  Notice that, since $c_k$ is opposite $z_k = c_+$ and since $c_-$ is the \emph{unique}~chamber of $\mathbb{A}_-$ opposite $c_+$, we have $c_k = c_-$.

\smallskip

By Lemma \ref{lem - adjacent case}, there exists $\sigma_1 \in \Sigma$ such that $\sigma_1.z_{k-1} = z_k$ and $\sigma_1.c_{k-1} = c_k$.
Moreover a straightforward inductive argument yields for each $i \in \{1, \dots, k\}$ an element $\sigma_i \in \Sigma$ such that
$\sigma_i \sigma_{i-1} \dots \sigma_1.z_{k-i} = z_k$ and $\sigma_i \sigma_{i-1} \dots \sigma_1.c_{k-i} = c_k$.
Let now $\gamma_+ = \sigma_k \dots \sigma_1$, so that $|\gamma_+|_\Sigma \leqslant k = d_{X_+}(c_+,\gamma_-. x_+)$.
By construction, we have $\gamma_+.(\gamma_-.x_+) = c_+$ and $\gamma_+.c_- = c_-$, that is $(\gamma_+\gamma_-).x =c$.
Therefore $(\gamma_+ \gamma_- \gamma^{-1}).c = c$ and hence there is $\sigma \in \Sigma$ such that
$\gamma= \sigma \gamma_+ \gamma_-$.
In fact, since $\sigma$ fixes $c$, it follows that $\sigma\sigma' \in \Sigma$ for each $\sigma' \in \Sigma$.
Upon replacing $\sigma_k$ by $\sigma\sigma_k$, we may -- and shall -- assume that $\gamma= \gamma_+ \gamma_-$.
Therefore we have:

$$\begin{array}{rcl}
|\gamma|_\Sigma &\leqslant &|\gamma_+|_\Sigma \, + \, | \gamma_-|_\Sigma\\
& \leqslant & d_{X_+}(c_+, \gamma_-.x_+) + d_{X_-}(c_-, x_-),\\
\end{array}$$
the last inequality coming from $|\gamma_+|_\Sigma \leqslant k = d_{X_+}(c_+,\gamma_-. x_+)$ and $(*)$ above.
By $(***)$ and the definition of $d_X$, this finally provides $|\gamma|_\Sigma \leqslant 2 \cdot d_X(c,\gamma.c)$, which finishes the proof.
\end{proof}

\subsection{A remark on distortion of lattices in rank one groups}\label{sec:remark}
Let $G = G_+ \times G_-$ be product of two totally disconnected
locally compact groups, let $\pi_\pm :G \to G_\pm$ denote the
canonical projections and let $\Gamma < G$ be a finitely generated
lattice. Assume that  $\overline{\pi_-(\Gamma)}$ is cocompact in
$G_-$ (this is automatic for example if  $\Gamma$ is irreducible).
Let also $U_- < G_-$ be a compact open subgroup and set  $\Gamma_- =
\Gamma \cap (G_+ \times U_-)$. Then the projection of $\Gamma_-$ to
$G_+$ is a lattice, and it is straightforward to verify that,
\emph{if  $\Gamma_-$ is finitely generated and undistorted in $G_-$,
then $\Gamma$ is undistorted in $G$.}

We emphasize however that, in the case of twin building lattices, the lattice $\Gamma_-$ should not be expected to be undistorted in $G_-$ beyond the affine case (which corresponds to the classical case of arithmetic lattices in semi-simple groups over local function fields). Indeed, a typical non-affine case is when $G_+$ and $G_-$ are Gromov hyperbolic (equivalently, the Weyl group is Gromov hyperbolic or, still equivalently, each of the buildings $X_+$ and $X_-$ are Gromov hyperbolic). Then a non-uniform lattice in $G_+$ is always distorted, as follows from the following.

\begin{lem}\label{lem:RankOne}
Let $G$ be a compactly generated Gromov hyperbolic totally disconnected locally compact group and $\Gamma < G$ be a finitely generated lattice. Then the following assertions are equivalent.
\begin{enumerate}
\item $\Gamma$ is a uniform lattice.

\item $\Gamma$ is undistorted in $G$.

\item $\Gamma$ is a Gromov hyperbolic group.
\end{enumerate}
\end{lem}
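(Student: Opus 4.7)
The plan is to prove the three conditions in a cycle $(i)\Rightarrow(ii)\Rightarrow(iii)\Rightarrow(i)$.

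The implication $(i)\Rightarrow(ii)$ is the \v{S}varc--Milnor lemma \cite[Proposition~I.8.19]{Bridson-Haefliger} already invoked in Section~1.1: a cocompact lattice is quasi-isometric to the ambient group, in particular it is undistorted.

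For $(ii)\Rightarrow(iii)$, I would first replace the word metric on $G$ by a proper geodesic metric. Since $G$ is compactly generated and totally disconnected, one can choose a compact open subgroup $U < G$ and form the associated Cayley--Abels graph $Y$: a locally finite connected graph on $G/U$ on which $G$ acts continuously, properly, and cocompactly. By \v{S}varc--Milnor, $Y$ is quasi-isometric to $G$ equipped with any word metric coming from a compact generating set; hence $Y$ is a proper Gromov hyperbolic geodesic metric space. Assumption $(ii)$ then gives a quasi-isometric embedding of the Cayley graph of $\Gamma$ into $Y$. Since the source is a geodesic space and the target is hyperbolic, stability of quasi-geodesics (Morse's lemma) implies that the Cayley graph of $\Gamma$ is itself Gromov hyperbolic, so $\Gamma$ is a hyperbolic group.

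The main obstacle is $(iii)\Rightarrow(i)$. I would argue by contradiction, assuming $\Gamma$ is hyperbolic and non-uniform, and working again with the Cayley--Abels graph $Y$. Since $G$ is Gromov hyperbolic and acts cocompactly on $Y$, every non-trivial element of $G$ acting on $Y$ is either elliptic (lying in a compact subgroup) or loxodromic (translating along an axis with positive translation length); in particular no element of $\Gamma$ is parabolic as an isometry of $Y$. On the other hand, the non-cocompactness of $\Gamma\backslash Y$ together with the finiteness of the covolume forces a thick--thin decomposition that produces a boundary point $\xi \in \partial Y$ whose stabilizer $\Gamma_\xi$ is infinite. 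In the absence of parabolic elements, $\Gamma_\xi$ must fix a second point of $\partial Y$ and act by translations on a common axis, so $\Gamma_\xi$ is virtually cyclic. The contradiction then comes from a volume comparison: in a hyperbolic geodesic space, horoballs have exponential volume growth, so the quotient of a horoball based at $\xi$ by the virtually cyclic group $\Gamma_\xi$ has infinite volume, contradicting the finiteness of $\mathrm{vol}(\Gamma\backslash G)$. The delicate step is making this horoball volume estimate quantitative in the present TDLC framework, which requires a careful analysis of the local combinatorial structure of $Y$ near $\xi$; this is the place where the absence of parabolic isometries on a hyperbolic Cayley--Abels graph is essential.
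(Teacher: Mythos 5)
Your implications (i)$\Rightarrow$(ii) and (ii)$\Rightarrow$(iii) are fine and coincide with the paper's argument: \v{S}varc--Milnor for the first, and pulling back hyperbolicity through a quasi-isometric embedding (equivalently, quasi-convexity of quasi-isometrically embedded subgroups) for the second.

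The implication (iii)$\Rightarrow$(i) is where your proposal has a genuine gap, and you essentially acknowledge it yourself. Two of the steps you rely on are not available in this generality. First, the ``thick--thin decomposition'' producing a boundary point with infinite stabilizer is a feature of pinched negative curvature via the Margulis lemma; there is no off-the-shelf analogue for a lattice in a general hyperbolic totally disconnected locally compact group acting on its Cayley--Abels graph, and establishing one would be a substantial piece of work in its own right. Second, the claim that horoballs in a hyperbolic geodesic space have exponential volume growth is not true at this level of generality (already in a simplicial tree the growth of a horoball transverse to the horospherical direction need not be controlled in the way your comparison requires), and the quantitative estimate you defer to ``a careful analysis of the local combinatorial structure of $Y$ near $\xi$'' is precisely the missing content. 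As written, the argument does not close.

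The paper's route for (iii)$\Rightarrow$(i) is much shorter and entirely algebraic: by Serre's covolume formula, the covolume of $\Gamma$ is a convergent sum of reciprocals of orders of vertex stabilizers $\Gamma_v$ (finite subgroups of $\Gamma$, since $\Gamma$ is discrete and the stabilizers are compact), taken over the $\Gamma$-orbits of vertices of a Cayley--Abels graph. If $\Gamma$ is non-uniform there are infinitely many orbits, so convergence forces $|\Gamma_v| \to \infty$; hence $\Gamma$ contains finite subgroups of unbounded order. But a Gromov hyperbolic group has only finitely many conjugacy classes of finite subgroups, so the orders of its finite subgroups are bounded. This contradiction replaces the entire horoball analysis, and is the key idea your proposal is missing.
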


\begin{proof}
(i) $\Rightarrow$ (ii)
Follows from the \v Svarc--Milnor Lemma.

\medskip \noindent
(ii) $\Rightarrow$ (iii)
Follows from the well-known fact that a quasi-isometrically embedded subgroup of a Gromov hyperbolic group is quasi-convex.

\medskip \noindent
(iii) $\Rightarrow$ (i)
By Serre's covolume formula (see \cite{Serre}) a non-uniform lattice in a totally disconnected locally compact group possesses finite subgroups of arbitrary large order, and can therefore not be Gromov hyperbolic.
\end{proof}

\subsection{Various notions of rank}
\label{ss - ranks}
As a consequence of Theorem~\ref{thm:KM:non-distortion}, we obtain the following estimate for one of the most basic quasi-isometric invariants attached to a finitely generated group.

\begin{cor}
\label{cor:FlatRank}
Let $\Gamma  < G = G_+ \times G_-$ be a twin building lattice with finite symmetric generating subset $\Sigma$.
Let $r$ denote the quasi-flat rank of $(\Gamma, d_\Sigma)$ and let $R$ denote the flat rank of the building $X_\pm$.
Then we have: $R \leqslant r \leqslant 2R$.
\end{cor}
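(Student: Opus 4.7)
My plan is to handle the two inequalities separately: the lower bound by explicit construction built on Proposition~\ref{prop:OrbitMap}, and the upper bound by invoking \cat rank-rigidity on the product building.

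For $R \leq r$, I would fix a twin apartment $\AAA = \AAA_+ \sqcup \AAA_-$ through the fundamental pair $c$. By Krammer's thesis \cite{Krammer}, the flat rank $R$ of $X_\pm$ equals a combinatorial invariant of the Coxeter system $(W, S)$ and is realised by a quasi-isometric embedding $j \colon \ZZ^R \hookrightarrow (W, d_S)$ whose image parametrises the chambers of a maximal isometric flat sitting inside $\AAA_\pm$. For each $w \in W$, strong transitivity supplies $n_w \in \Stab_\Gamma(\AAA)$ with $n_w.c = (w.c_+, w.c_-)$; because $\Gamma$ acts type-preservingly, $n_w$ realises $w$ on $\AAA$, so
$$d_X(c, n_w^{-1} n_{w'}.c) = 2\,\ell(w^{-1}w'),$$
where $\ell$ denotes $S$-length. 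Proposition~\ref{prop:OrbitMap} then makes $|n_w^{-1} n_{w'}|_\Sigma$ bilipschitz equivalent to $\ell(w^{-1}w')$, so $w \mapsto n_w$ is a quasi-isometric embedding $(W, d_S) \hookrightarrow (\Gamma, d_\Sigma)$. Precomposing with $j$ exhibits the desired $R$-dimensional quasi-flat in $\Gamma$.

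For $r \leq 2R$, a quasi-flat $\ZZ^r \hookrightarrow (\Gamma, d_\Sigma)$ of maximal dimension, composed with the orbit map $\gamma \mapsto \gamma.c$, yields by Proposition~\ref{prop:OrbitMap} a quasi-isometric embedding $\ZZ^r \hookrightarrow (X_+ \times X_-, d_X)$. Each factor $X_\pm$ is \cat and admits a cocompact (strongly transitive) isometric action, so Kleiner's rank-rigidity theorem identifies its quasi-flat rank with its isometric flat rank, namely $R$. Additivity of flat rank under cartesian products of \cat spaces then bounds the quasi-flat rank of $X_+ \times X_-$ by $2R$, forcing $r \leq 2R$.

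The main obstacle is the upper bound, which genuinely relies on an external \cat rank-rigidity input (Kleiner's theorem) to convert a quasi-isometrically embedded flat in the product building into an isometrically embedded one; without such a conversion there is no combinatorial handle on $r$. The lower bound, by contrast, is an explicit lift of Weyl-group flats into $\Gamma$, rendered metrically faithful by the non-distortion statement of Proposition~\ref{prop:OrbitMap}.
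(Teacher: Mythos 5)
Your argument is correct and follows essentially the same route as the paper: the lower bound lifts a maximal flat of the apartment (equivalently, a maximal free abelian subgroup of the Weyl group, undistorted in $W$ by the flat torus theorem) into $\Gamma$ through the orbit map, made metrically faithful by Proposition~\ref{prop:OrbitMap}, while the upper bound composes a quasi-flat of $\Gamma$ with the orbit map and appeals to Kleiner's theorem on quasi-flats in \cat spaces. The one step to tighten is the very last: Kleiner's theorem should be applied to the product $X_+\times X_-$ itself, a cocompact \cat space whose flat rank is $2R$ by additivity --- computing the quasi-flat rank of each factor separately and then invoking additivity of the \emph{flat} rank does not by itself bound the \emph{quasi-flat} rank of the product.
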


Recall that by definition, the \textbf{flat rank} (resp.
\textbf{quasi-flat rank}) of a metric space is the maximal rank of a
flat (resp. quasi-flat), \emph{i.e.} an isometrically embedded
(resp. quasi-isometrically embedded) copy of $\RR^n$. By \cite{CaHa}
the flat rank of a building coincides with the maximal rank of a
free Abelian subgroup of its Weyl group $W$, and this quantity may
be computed explicitly in terms of the Coxeter diagram of $W$, see
\cite[Theorem~6.8.3]{Krammer}.

\begin{proof}
[Proof of Corollary~\ref{cor:FlatRank}]
Let us first prove $r
\leqslant 2R$. Let $\varphi : ({\bf R}^r, d_{\rm eucl}) \to (\Gamma,
d_\Sigma)$ denote a quasi-isometric embedding of a Euclidean space
in the Cayley graph of $\Gamma$. With the notation of
Proposition~\ref{prop:OrbitMap}, we know that the orbit map
$\omega_c : \Gamma \to X_+ \times X_-$ defined by $\gamma \mapsto
\gamma.c$ is a quasi-isometric embedding. Therefore the composed map
$\omega_c \circ \varphi : ({\bf R}^r, d_{\rm eucl}) \to X_+ \times
X_-$ is a quasi-isometric embedding. By \cite[Theorem C]{Kleiner},
this implies the existence of flats of dimension $r$ in the product
of two spaces of flat rank $R$; hence $r \leqslant 2R$.

\medskip

We now turn to the inequality $R \leqslant r$. As mentioned above,
it is shown in \cite{CaHa} the flat rank of a building coincides
with the flat rank of any of its apartment. Since the standard twin
apartment is contained in the image of $\Gamma$ under the orbit map
$\Gamma \to X_+ \times X_-$, the desired inequality follows directly
from the non-distortion of $\Gamma$ established in
Proposition~\ref{prop:OrbitMap}.
\end{proof}

Note that another notion of rank, relevant to G. Willis' general theory of totally disconnected locally compact groups, is discussed for the full automorphism groups $G_\pm = {\rm Aut}(X_\pm)$ in \cite{BRW}, and turns out to coincide with the above notions of rank.

\begin{proof}
[Proof of Corollary~\ref{cor:QIclasses}] Since there exist twin
buildings of arbitrary flat rank (choose for instance Dynkin
diagrams such that the associated Coxeter diagram contains more and
more commuting $\widetilde A_2$-diagrams), we deduce that twin
building lattices fall into infinitely many quasi-isometry classes.
This observation may be combined with the simplicity theorem from
\cite{CaRe} to yield the desired result.
\end{proof}

\appendix
\section{Integrability of undistorted lattices}\label{sec:integrability}

In this section, we give up the specific setting of twin building
lattices and provide a simple condition ensuring that non-distorted
finitely generated lattices in totally disconnected groups are
square-integrable.

\subsection{Schreier graphs and lattice actions}
\label{ss - Schreier graphs}
Let us a consider a totally disconnected, locally compact group $G$.
As before we assume that $G$ contains a finitely generated lattice, say $\Gamma$, which implies that $G$ is compactly generated \cite[Lemma~2.12]{CaMo}.
By \cite[III.4.6, Corollaire 1]{BourbakiTGI}, we know that $G$ contains a compact open subgroup, say $U$.
Let $C$ be a compact generating subset of $G$ which, upon replacing $C$ by $C \cup C^{-1}$, we may -- and shall -- assume to be symmetric: $C=C^{-1}$.
We set $\widehat{\Sigma} = UCU$, which is still a symmetric generating set for $G$.

We now introduce the \textbf{Schreier graph}~$\mathfrak{g}_{U,\widehat{\Sigma}}$, or simply $\mathfrak{g}$, associated to the above choices.
It is the graph whose set of vertices is the discrete set $G/U$, which is countable whenever $G$ is $\sigma$-compact.
Two distinct vertices $gU$ and $hU$ are connected by an edge if, and only if, we have $g^{-1}h \in \widehat{\Sigma}$ \cite[\S11.3]{Monod_LN}.
The natural $G$-action on $\mathfrak{g}$ by left translation is proper, and it is isometric whenever we endow $\mathfrak{g}$ with the metric $d_\g$ for which all edges have length 1.
We view the identity class $1_GU$ as a base vertex of the graph $\mathfrak{g}$, which we denote by $v_0$.

Denoting by $\| \cdot \|_{\widehat{\Sigma}}$ the word metric on $G$ attached to $\widehat{\Sigma}$, we have: $\|g\|_{\widehat{\Sigma}} = d_{\widehat{\Sigma}} (1_G, g)$ for any $g \in G$.
Notice that the generating set $\widehat{\Sigma}$ of $G$ consists by definition  of those elements $g \in G$ such that $d_\g(v_0, g.v_0) \leqslant 1$.
In particular, for all $g, h \in G$, we have:
$$d_\g (g.v_0, h.v_0) \leqslant d_{\widehat{\Sigma}}(g, h) \leqslant d_\g (g.v_0, h.v_0) +1.$$
Moreover $d_\g (g.v_0, h.v_0) = d_{\widehat{\Sigma}}(g, h)$ whenever $g.v_0 \neq h.v_0$.

In the present setting, using again \cite[III.4.6, Corollaire
1]{BourbakiTGI} and the discreteness of the $\Gamma$-action, we may
-- and shall -- work with a Schreier graph $\mathfrak{g}$ defined by
a compact open subgroup $U$ small enough to satisfy $\Gamma \cap U=
\{Ê1_G \}$. Thus we have:
$${\rm Stab}_\Gamma(v_0) = \Gamma \cap U = \{Ê1_G \}.$$

Let $\mathscr{V} = \{v_0, v_1, \dots\}$ be a set of representatives for the $\Gamma$-orbits of vertices.
The element $v_0$ is the previous one, and for each $i > 0$, we choose $v_i$ in such a way that $d_\g(v_i,v_0) \leqslant d_\g(v_i, \gamma.v_0)$ for all $\gamma
\in \Gamma$; this is possible because the distance $d_\g$ takes integral values.
We set $g_0 = 1$; for each $i > 0$, since the $G$-action on the vertices of $\mathfrak{g}$ is transitive, there exists $g_i \in G$ such that $g_i.v_i = v_0$.
Thus for any $g \in G$ there exists $j \geqslant 0$ such that $g.v_0 \in \Gamma.v_i$, which provides the partition:
$$G = \bigsqcup_{j \geqslant 0}Ê\Gamma g_j^{-1}U.$$

Furthermore, for each $i \geqslant 0$, we choose a Borel subset $V_i \subset U$ which is a section of the right $U$-orbit map $U \to \Gamma \setminus (\Gamma g_i^{-1} U)$ defined by $u \mapsto \Gamma g_i^{-1}u$.
Setting $F_i = g_i^{-1}V_ig_i$, we obtain a subset $F_i$ of ${\rm Stab}_G(v_i)$ such that
$$\mathscr{F} = \bigsqcup_{i \geqslant 0} F_{v_i}g_i^{-1}$$
is a Borel fundamental domain for $\Gamma$ in $G$.
We normalize the Haar measure on $G$ so that $\mathscr{F}$ has volume~$1$.

\subsection{Non-distortion implies square-integrability}
\label{ss - sq-int}
We can now turn to the proof of the latter implication, more precisely Theorem~\ref{th:InductionCocycle}.

\begin{proof}
[Proof of Theorem~\ref{th:InductionCocycle}]
Let $g \in G$ and $h \in \mathscr{F}$.

\smallskip

On the one hand, by definition of the induction cocycle $\alpha : G \times \mathscr{F} \to \Gamma$, the element $\alpha(g, h) = \gamma \in \Gamma$ is defined by $\gamma hg \in\mathscr{F}$.
Therefore, by construction of the fundamental domain $\mathscr{F}$, there exist $i \geqslant 0$ and $u \in F_i$ such that $\gamma hg = ug_i^{-1}$.
Let us apply the latter element to the origin $v_0$ of $\mathfrak{g}$.
We obtain $\gamma hg.v_0 = ug_i^{-1}v_0 = u.v_i$, and since $u \in F_i$ and $F_i \subset {\rm Stab}_G(v_i)$, this finally provides $\gamma hg.v_0=v_i$.
By this and the choice of $v_i$ in its $\Gamma$-orbit, we have:

\medskip

\hspace{1cm}$(\star)$ \quad $d_\mathfrak{g}(v_0,v_i) \leqslant d_\mathfrak{g}(v_0,\gamma^{-1}.v_i) = d_\mathfrak{g}(v_0, hg.v_0)$.

\smallskip

On the other hand, let $\Sigma$ be a finite symmetric generating set for $\Gamma$ and let $d_\Sigma$ be the associated word metric; we set $| \gamma |_\Sigma = d_\Sigma (1_G,\gamma)$ for $\gamma \in \Gamma$.
Since the metric spaces $(G,d_{\widehat{\Sigma}})$ and $(\mathfrak{g},d_\mathfrak{g})$ are quasi-isometric (\ref{ss - Schreier graphs}), the assumption that $\Gamma$ is undistorted is equivalent to the fact that the $\Gamma$-orbit map $\Gamma \to \g$ of $v_0$ defined by $\gamma \mapsto \gamma.v_0$
is a quasi-isometric embedding.
In particular, there exist constants $L \geqslant 1$ and $M \geqslant 0$ such that
$$|\gamma|_\Sigma \leqslant L \cdot d_\g(v_0, \gamma.v_0) +C$$
for all $\gamma \in \Gamma$.
Moreover $d_\g$ takes integer values and ${\rm Stab}_\Gamma(v_0) = \{Ê1_G \}$, so for all non-trivial $\gamma \in \Gamma$ we have: $L .d_\g(v_0, \gamma.v_0) +C \leqslant (L+C).d_\g(v_0, \gamma.v_0)$.
Therefore, upon replacing $L$ by a larger constant we may -- and shall -- assume that $C = 0$.

\medskip

Our aim is to evaluate $| \gamma |_\Sigma = |\alpha(g, h)|_\Sigma$ in terms of $\| g \|_{\widehat{\Sigma}}$ and $\| h \|_{\widehat{\Sigma}}$.
Note that $| \gamma |_\Sigma = | \gamma^{-1} |_\Sigma$ since $\Sigma$ is symmetric.

First, we deduce successively from non-distortion, from the triangle inequality inserting $\gamma^{-1}.v_i$, and from the fact that the $\Gamma$-action on $\mathfrak{g}$ is isometric, that:
$$\begin{array}{rcl}
| \gamma^{-1} |_\Sigma & \leqslant & L \cdot d_\g(v_0, \gamma^{-1}.v_0)\\
& \leqslant & L \cdot \big(d_\g(v_0, \gamma^{-1}.v_j) + d_\g(\gamma^{-1}.v_0, \gamma^{-1}.v_j)\big)\\
& \leqslant & L \cdot \big(d_\g(v_0, \gamma^{-1}.v_j) + d_\g(v_0, v_j)\big).
\end{array}$$

Then, we deduce successively from $(\star)$, from the triangle inequality inserting $h.v_0$, and from the fact that the $G$-action on $\mathfrak{g}$ is isometric, that:
$$\begin{array}{rcl}
| \gamma^{-1} |_\Sigma & \leqslant & 2L \cdot d_\g(v_0, hg.v_0)\\
& \leqslant & 2L \cdot \big( d_\g(v_0, h .v_0)  + d_\g(h.v_0, hg.v_0) \big)\\
& \leqslant & 2L \cdot \big( d_\g(v_0, h .v_0)  + d_\g(v_0, g.v_0) \big).
\end{array}$$

Finally, by definition of the Schreier graph we deduce that $| \gamma^{-1} |_\Sigma \leqslant 2L \cdot \bigl( \|g\|_{\widehat{\Sigma}} +  \|h\|_{\widehat{\Sigma}} \bigr)$.
Recall that we want to prove that the function $h \mapsto |\alpha(g, h)|_\Sigma$ belongs to ${\rm L}^p(\mathscr{F},{\rm d}h)$.
Since ${\rm Vol}(\mathscr{F},{\rm d}h)=1$, so does the constant function $h \mapsto \|g\|_{\widehat{\Sigma}}$, therefore it remains to prove the lemma below.
\end{proof}

\begin{lem}
The function $h \mapsto \|h\|_{\widehat{\Sigma}}$ belongs to ${\rm L}^p(\mathscr{F},{\rm d}h)$.
\end{lem}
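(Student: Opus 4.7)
The plan is to transfer the integrability assumption from the fundamental domain $\Omega$ provided by the hypothesis of Theorem~\ref{th:InductionCocycle} to the explicit Schreier-graph fundamental domain $\mathscr{F}$ constructed in \S\ref{ss - Schreier graphs}. For each $h \in \mathscr{F}$ there is a unique $\gamma_h \in \Gamma$ with $\gamma_h h \in \Omega$, and the strategy is to compare $\|h\|_{\widehat{\Sigma}}$ to $\|\gamma_h h\|_{\widehat{\Sigma}}$ and then change variables.

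First I would establish a pointwise estimate $\|h\|_{\widehat{\Sigma}} \leqslant 2 + d_\g(v_0, v_i)$ for every $h \in F_i g_i^{-1}$. After replacing $C$ by $C \cup \{1_G\}$, which changes neither its symmetry, compactness, nor generating property, we may assume $1_G \in C$, so that $U \subseteq UCU = \widehat{\Sigma}$ and in particular $\|u\|_{\widehat{\Sigma}} \leqslant 1$ for every $u \in U$. Writing $h = u g_i^{-1}$ with $u \in F_i \subset U$ and using the inequality $d_{\widehat{\Sigma}}(1_G, g_i^{-1}) \leqslant d_\g(v_0, g_i^{-1}.v_0) + 1 = d_\g(v_0, v_i) + 1$ recalled in \S\ref{ss - Schreier graphs}, subadditivity of the word length will yield the claim.

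Next I would bound $d_\g(v_0, v_i)$ by $\|\gamma h\|_{\widehat{\Sigma}}$ for every $\gamma \in \Gamma$. Since $F_i \subset \mathrm{Stab}_G(v_i)$, we have $\gamma h.v_0 = \gamma u g_i^{-1}.v_0 = \gamma u.v_i = \gamma.v_i$. The defining extremality of $v_i$, applied to $\gamma^{-1}$, then gives $d_\g(v_0, v_i) \leqslant d_\g(v_0, \gamma.v_i) = d_\g(v_0, \gamma h.v_0) \leqslant \|\gamma h\|_{\widehat{\Sigma}}$. Combined with the previous paragraph this gives the uniform inequality $\|h\|_{\widehat{\Sigma}} \leqslant 2 + \|\gamma h\|_{\widehat{\Sigma}}$, valid for every $\gamma \in \Gamma$ and in particular for $\gamma = \gamma_h$.

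It then remains to integrate. The map $\varphi : \mathscr{F} \to \Omega$, $h \mapsto \gamma_h h$, is a measurable bijection which on each slice $\mathscr{F} \cap \gamma^{-1}\Omega$ coincides with left translation by $\gamma \in \Gamma$; by left-invariance of the Haar measure, $\varphi$ is therefore measure-preserving. Hence $\int_{\mathscr{F}} \|\gamma_h h\|_{\widehat{\Sigma}}^p \, \mathrm{d}h = \int_\Omega \|h'\|_{\widehat{\Sigma}}^p \, \mathrm{d}h' < \infty$ by hypothesis, and combining this with the estimate $\|h\|_{\widehat{\Sigma}} \leqslant 2 + \|\gamma_h h\|_{\widehat{\Sigma}}$, the inequality $(a+b)^p \leqslant 2^{p-1}(a^p + b^p)$, and ${\rm Vol}(\mathscr{F}) = 1$, one obtains $\int_{\mathscr{F}} \|h\|_{\widehat{\Sigma}}^p \, \mathrm{d}h < \infty$. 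The only delicate point is the measure-theoretic identification $\varphi_*\, \mathrm{d}h = \mathrm{d}h$, which itself only depends on left-invariance of the Haar measure applied slice by slice; the remaining steps are routine bookkeeping.
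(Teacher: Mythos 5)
Your proof is correct and follows essentially the same route as the paper's: the pointwise bound $\|h\|_{\widehat{\Sigma}} \leqslant \|\gamma h\|_{\widehat{\Sigma}} + \mathrm{const}$ obtained from the minimality of $d_\g(v_0,v_i)$ in its $\Gamma$-orbit together with the Schreier-graph comparison, followed by a piecewise left-translation change of variables carrying the integral over $\mathscr{F}$ to one over $\Omega$. One small inaccuracy: $F_i = g_i^{-1}V_ig_i$ lies in $\mathrm{Stab}_G(v_i) = g_i^{-1}Ug_i$, not in $U$; but since $F_ig_i^{-1} = g_i^{-1}V_i \subseteq g_i^{-1}U$ (or, more directly, since $h.v_0 = v_i$ gives $\|h\|_{\widehat{\Sigma}} \leqslant d_\g(v_0,v_i)+1$ straight from the displayed quasi-isometry inequality, as the paper does), your estimate $\|h\|_{\widehat{\Sigma}} \leqslant 2 + d_\g(v_0,v_i)$ still holds and the rest of the argument is unaffected.
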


\begin{proof}
Let $h \in \mathscr{F}$.
By construction of the fundamental domain $\mathscr{F}$, there exist $i \geqslant 0$ and $u_i$ in $F_i$, hence in ${\rm Stab}_G(v_i)$, such that $h = u_ig_i^{-1}$.
This implies $h.v_0 = u_i.(g_i^{-1}.v_0) = u_i.v_i = v_i$, and also $(\gamma h).v_0 = \gamma.v_i$ for each $\gamma \in \Gamma$.
Now the explicit form of the quasi-isometry equivalence (\ref{ss - Schreier graphs}) between $(\mathfrak{g},d_\mathfrak{g})$ and $(G, d_{\widehat{\Sigma}})$ implies:

\medskip

\hspace{1cm}$d_\g (v_0, h.v_0) \leqslant \|h\|_{\widehat{\Sigma}} \leqslant d_\g (v_0, h.v_0) +1$,

\smallskip

and

\smallskip

\hspace{1cm}$d_\g (v_0, (\gamma h).v_0) \leqslant \|\gamma h\|_{\widehat{\Sigma}} \leqslant d_\g (v_0, (\gamma h).v_0) +1$.

\medskip

\noindent
Moreover by the choice of $v_i$ in its $\Gamma$-orbit, we have $d_\g (v_0, h.v_0) \leqslant d_\g (v_0, (\gamma h).v_0)$ for any $\gamma \in \Gamma$.
This allows us to put together the above two double inequalities, and to obtain (after forgetting the extreme upper and lower bounds):

\medskip

\hspace{1cm}$(\dagger)$ \quad $\|h\|_{\widehat{\Sigma}} \leqslant \|\gamma h\|_{\widehat{\Sigma}} + 1$.

\medskip

\noindent
for any $h \in \mathscr{F}$ and $\gamma \in \Gamma$.

\medskip

Recall that $p \in [1; +\infty)$ is an integer such that we have a Borel fundamental domain $\Omega$ for which $\displaystyle \int_\Omega \bigl( \| h \|_{\widehat{\Sigma}} \bigr)^p \mathrm{d}h < \infty$.
Since $G = \bigsqcup_{\gamma \in \Gamma} \gamma^{-1} \Omega$ we can write:

\medskip

\hspace{1cm} $\displaystyle \int_\mathscr{F} \bigl( \| h \|_{\widehat{\Sigma}} \bigr)^p \mathrm{d}h
= \sum_{\gamma \in \Gamma}  \, \int_{\mathscr{F} \cap \gamma^{-1}\Omega} \bigl( \| h \|_{\widehat{\Sigma}} \bigr)^p \mathrm{d}h
$.

\medskip

But in view of $(\dagger)$ and of the unimodularity of $G$ (which contains a lattice), we have:

\medskip

\hspace{1cm} $\displaystyle \int_{\mathscr{F} \cap \gamma^{-1}\Omega} \bigl( \| h \|_{\widehat{\Sigma}} \bigr)^p \mathrm{d}h
\leqslant  \int_{\mathscr{F} \cap \gamma^{-1}\Omega} \bigl( \| \gamma h \|_{\widehat{\Sigma}} + 1 \bigr)^p \mathrm{d}h
= \int_{\gamma\mathscr{F} \cap \Omega} \bigl( \| h \|_{\widehat{\Sigma}} + 1 \bigr)^p \mathrm{d}h$,

\medskip

which finally provides

\medskip

\hspace{1cm} $\displaystyle \int_\mathscr{F} \bigl( \| h \|_{\widehat{\Sigma}} \bigr)^p \mathrm{d}h
\leqslant \sum_{\gamma \in \Gamma}  \, \int_{\gamma\mathscr{F} \cap \Omega} \bigl( \| h \|_{\widehat{\Sigma}} + 1 \bigr)^p \mathrm{d}h
= \int_\Omega \bigl( \| h \|_{\widehat{\Sigma}} + 1 \bigr)^p \mathrm{d}h$.

\medskip

The conclusion follows because $\mathscr{F}$ has Haar volume equal to 1 and because by assumption $h \mapsto \|h\|_{\widehat{\Sigma}}$ belongs to ${\rm L}^p(\Omega,{\rm d}h)$.
\end{proof}

\subsection{$p$-integrability of twin building lattices}
\label{ss - sq-int twin buildings}
Let us finish by mentioning the following fact which, using Theorem~\ref{th:InductionCocycle}, allows us to prove the main result of \cite{Remy:Integrable} in a more conceptual way.

\begin{lem}
\label{lem - sq-int twin buildings}
Let $\Gamma$ be a twin building lattice and let $G$ be the product of the automorphism groups of the associated buildings $X_\pm$.
Let $W$ be the Weyl group and $\sum_{n \geqslant 0} c_n t^n$ be the growth series of $W$ with respect to its canonical set of generators $S$, i.e.,  $c_n = \# \{Êw \in W : \ell_S(w) = n \}$.
Let $q_{\rm min}$ denote the minimal order of root groups and assume that $\sum_{n \geqslant 0} c_n q_{\rm min}^{-n} < \infty$.
Then $\Gamma$ admits a fundamental domain $\mathscr{F}$ in $G$, with associated induction cocycle $\alpha_\mathscr{F}$, such that $h \mapsto \alpha_\mathscr{F}(g,h)$ belongs to  ${\rm L}^p(\mathscr{F},{\rm d}h)$ for any $g \in G$ and any $p \in [1;+\infty)$.
\end{lem}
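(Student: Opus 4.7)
The strategy is to apply Theorem~\ref{th:InductionCocycle}: since Theorem~\ref{thm:KM:non-distortion} asserts that $\Gamma$ is undistorted in $G$, it suffices to exhibit, for some compact generating set $\widehat{\Sigma}$ of $G$, a Borel fundamental domain $\mathscr{F}$ for $\Gamma$ in $G$ satisfying $\int_\mathscr{F} \|h\|_{\widehat{\Sigma}}^p \, dh < \infty$ for every $p \in [1, +\infty)$. The plan is to read off $\mathscr{F}$ from the combinatorics of the twin building and to reduce the integral to a series involving the growth function of $W$ and the thicknesses $q_s$.

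Let $U = K_+ \times K_-$ be the $G$-stabilizer of the fundamental opposite pair $(c_+, c_-)$, a compact open subgroup, and apply the construction of Section~\ref{ss - Schreier graphs}. The vertex set $G/U$ identifies with the product of chamber sets of $X_+$ and $X_-$, and for a suitable choice of $\widehat{\Sigma} = UCU$ the word length $\|g\|_{\widehat{\Sigma}}$ is comparable to the combinatorial distance $d_X((c_+, c_-), g.(c_+, c_-))$ of Section~\ref{ss - proof}. By strong transitivity of $\Gamma$ on $X$, the $\Gamma$-orbits on $G/U$ are parameterized by $W$ via the codistance; a canonical representative of the orbit of type $w$ is $y_w = (c_+, z_w)$, where $z_w$ is the chamber of the standard twin apartment with $\delta^*(c_+, z_w) = w$, and $d_X((c_+, c_-), y_w) = \ell_S(w)$. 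Write $\mathscr{F} = \bigsqcup_{w \in W} F_w$, with $F_w$ a Borel section for the action of the finite stabilizer $\Gamma_w = \Stab_\Gamma(y_w)$ on $g_w U$ for any $g_w \in G$ with $g_w . v_0 = y_w$. Each $F_w$ has Haar volume $\Vol(U)/|\Gamma_w|$, and every $h \in F_w$ satisfies $\|h\|_{\widehat{\Sigma}} \leqslant \ell_S(w) + 1$.

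By the Bruhat decomposition intrinsic to the twin root group datum of $\Gamma$, the stabilizer $\Gamma_w$ contains the product of the $\ell_S(w)$ root groups $U_\alpha$ indexed by the positive roots $\alpha$ with $w^{-1}(\alpha) < 0$, of total cardinality $q_w = \prod_i q_{s_i} \geqslant q_{\min}^{\ell_S(w)}$. Consequently
$$\int_\mathscr{F} \|h\|_{\widehat{\Sigma}}^p \, dh \;\leqslant\; \Vol(U) \sum_{w \in W} \frac{(\ell_S(w) + 1)^p}{q_w} \;\leqslant\; \Vol(U) \sum_{n \geqslant 0} c_n \, \frac{(n + 1)^p}{q_{\min}^n}.$$
The growth series $\sum c_n t^n$ is a rational function with non-negative coefficients, so its radius of convergence $\rho$ is attained at a real pole; the hypothesis $\sum_n c_n q_{\min}^{-n} < \infty$ therefore forces $\rho > q_{\min}^{-1}$, whence $c_n = O(n^d \rho^{-n})$ for some $d \geqslant 0$. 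The polynomial factor $(n+1)^p$ is thus absorbed for every $p$, the integral is finite, and Theorem~\ref{th:InductionCocycle} yields the claim.

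The principal technical obstacle is the stabilizer estimate $|\Gamma_w| \geqslant q_w$. Extracting it requires identifying, inside $\Stab_\Gamma(c_+) \cap \Stab_\Gamma(z_w)$, the unipotent subgroup generated by the root groups $U_\alpha$ for $\alpha$ positive with $w^{-1}(\alpha) < 0$, whose product has cardinality exactly $q_w$ by the axioms of a twin root group datum. Once this identification is in place, the remainder of the argument is the elementary volume and growth-series computation above.
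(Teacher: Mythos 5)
Your proof is correct and follows the same overall strategy as the paper's: produce a Borel fundamental domain $\mathscr{F}=\bigsqcup_{w\in W}F_w$ indexed by the Birkhoff/codistance decomposition, observe that $\|h\|_{\widehat{\Sigma}}\leqslant \ell_S(w)+1$ on $F_w$ while $\Vol(F_w)$ is at most a constant times $q_{\min}^{-\ell_S(w)}$, sum the resulting series, and feed the finiteness of $\int_{\mathscr{F}}\|h\|_{\widehat{\Sigma}}^p\,\mathrm{d}h$ into Theorems~\ref{thm:KM:non-distortion} and~\ref{th:InductionCocycle}. The only differences are points of detail: the paper imports the domain $D=\bigsqcup_w D_w$ with $\Vol(D_w)\leqslant q_{\min}^{-\ell_S(w)}$ directly from \cite{Remy:Integrable}, whereas you rederive that volume bound from the stabilizer estimate $\Stab_\Gamma(c_+)\cap\Stab_\Gamma(z_w)\supseteq U_+\cap wU_-w^{-1}=U_w$ with $|U_w|\geqslant q_{\min}^{\ell_S(w)}$ (which is correct, and is exactly the computation underlying the cited result); and you make explicit, via rationality of the Coxeter growth series and Pringsheim's theorem, why the hypothesis $\sum_n c_nq_{\min}^{-n}<\infty$ absorbs the polynomial factor $(n+1)^p$ -- a point the paper passes over with ``from which the conclusion follows''. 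Both added details are welcome and sound.
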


\begin{proof}
We freely use the notation of \ref{ss - generating system} and \cite{Remy:Integrable}.
We denote by $\mathscr{B}_\pm$ the stabilizer of the standard chamber $c_\pm$ in the closure $\overline{\Gamma}^{{\rm Aut}(X_\pm)}$.
By [loc. cit.] there is a fundamental domain $\mathscr{F} = D = \bigsqcup_{w \in W} D_w$ such that ${\rm Vol}(D_w,{\rm d}h) \leqslant q_{\rm min}^{-\ell_S(w)}$.
If we choose the compact generating set $\widehat{\Sigma} = \bigsqcup_{(s_-,s_+) \in S \times S} \mathscr{B}_-s_-\mathscr{B}_- \times \mathscr{B}_+s_+\mathscr{B}_+$, we see that by definition of $D_w$, which his contained in $\mathscr{B}_- \times \mathscr{B}_+w$, we have $\| h \|_{\widehat{\Sigma}} \leqslant  \ell_S(w)$ for any $w \in W \setminus \{Ê1 \}$ and any $h \in D_w$.
Therefore for any $p \in [1;+\infty)$ we have: $\displaystyle \int_\mathscr{F}Ê\bigl( \| h \|_{\widehat{\Sigma}} \bigr)^p  {\rm d}h \leqslant \sum_{n \geqslant 0} n^p c_n q_{\rm min}^{-n}$, from which the conclusion follows.
\end{proof}


\bibliographystyle{amsalpha}
\bibliography{NonDistorsion}
\end{document}